\theoremstyle{definition}
\newtheorem{Def}{Definition}[section]
\newtheorem{Thm}[Def]{Theorem}
\newtheorem{Rem}[Def]{Remark}
\newtheorem{Cor}[Def]{Corollary}
\newtheorem{Lem}[Def]{Lemma}
\numberwithin{equation}{section}
\begin{document}
 
\title{Ramanujan type congruences for modular forms of several variables}

\author{Toshiyuki Kikuta and Shoyu Nagaoka}
\maketitle




\maketitle

\noindent 
{\bf Mathematics subject classification}: Primary 11F33 $\cdot$ Secondary 11F46\\
\noindent
{\bf Key words}: Congruences for modular forms, Cusp forms

\begin{abstract}
We give congruences between the Eisenstein series and a cusp form in the cases of Siegel modular forms and Hermitian modular forms. We should emphasize that there is a relation between the existence of a prime dividing the $k-1$-th generalized Bernoulli number and the existence of non-trivial Hermitian cusp forms of weight $k$. We will conclude by giving numerical examples for each case.  
\end{abstract}

\section{Introduction}
\label{intro}
As is well known, Ramanujan's congruence asserts that 
\begin{align*}
\sigma _{11}(n)\equiv \tau (n) \bmod{691},
\end{align*} 
where $\sigma _k(n)$ is the sum of $k$-th powers of the divisors of $n$ 
and $\tau (n)$ is the $n$-th Fourier coefficient of Ramanujan's 
$\Delta $ function. It should be noted that $\sigma _{11}(n)$ 
is the $n$-th Fourier coefficient of the elliptic Eisenstein series 
of weight $12$ for $n\ge 1$. We can regard this congruence in two ways:\\
~~~~~~~~~~~~~~~~~(I) Congruences between Hecke eigenvalues of 
two eigenforms.\\
~~~~~~~~~~~~~~~~~(II) Congruences between Fourier coefficients 
of two modular forms. \\
There are many studies on (I) related to congruence primes
in the case of modular forms with several variables. 
For example, see \cite{Br,Kat1,Kat2,Kat-Miz,Kuro1,Kuro2,Kuro3}. 
On the other hand, 
it seems that studies on (II) are little known. However, 
Bringmann-Heim \cite{Bri-Hei} 
studied some congruences between Fourier coefficients of two Jacobi forms. 
These properties essentially give congruences for Fourier coefficients of 
two Siegel modular forms of degree $2$. 

In the present paper, we study congruences for Siegel modular forms and for 
Hermitian modular 
forms in the sense of (II) in a way different from that of Bringmann-Heim. 
In particular, 
the results herein for the case of Hermitian modular forms are completely new. 
In addition, we should emphasize that there is a relation between 
the existence of a prime dividing the $k-1$-th generalized Bernoulli 
number and the existence of non-trivial Hermitian cusp forms of weight 
$k$ (Theorem \ref{Thm2}). We will conclude by giving numerical examples for each case.   
 
\section{Statement of results}
\subsection{The case of Siegel modular forms}
Let $M_k(\Gamma_n)$ denote the space of Siegel modular forms of weight 
$k$ for the Siegel modular group $\Gamma_n:=Sp_n(\mathbb{Z})$
and let $S_k(\Gamma_n)$ be the subspace of cusp forms. Any Siegel modular form
$F(Z)$ in $M_k(\Gamma_n)$ has a Fourier expansion of the form
\[
F(Z)=\sum_{0\leq T\in\Lambda_n}a_F(T)q^T,\quad q^T:=e^{2\pi i\text{tr}(TZ)},
\quad Z\in\mathbb{S}_n,
\]
where
\[
\Lambda_n
:=\{ T=(t_{ij})\in Sym_n(\mathbb{Q})\;|\; t_{ii},\;2t_{ij}\in\mathbb{Z}\; \}
\]
(the lattice in $Sym_n(\mathbb{R})$ of half-integral, symmetric matrices),
and $\mathbb{S}_n$ is the Siegel upper-half space of degree $n$.

For any subring $R\subset\mathbb{C}$, we define
\begin{align*}
& M_k(\Gamma_n)_R:=\{ F=\sum_{T\in\Lambda_n}a_F(T)q^T\;|\;
a_F(T)\in R\;(\forall\, T\in\Lambda_n)\;\},\\
& S_k(\Gamma_n)_R:=M_k(\Gamma_n)_R\cap S_k(\Gamma_n).
\end{align*}

In this paper, we consider the degree 2 case. A typical example of Siegel 
modular forms is the Siegel-Eisenstein series
\[
E_k(Z):=\sum_{M= \left( \begin{smallmatrix} * & * \\ C & D\end{smallmatrix} 
\right)}\det (CZ+D)^{-k},\quad Z\in \mathbb{S}_2,
\]
where $k>3$ is even, and $M$ runs over a set of representatives
$\left\{\left( \begin{smallmatrix} * & * \\ 0_2 & * \end{smallmatrix} 
\right)\right\}\backslash\Gamma_2$. 
It is known that
$E_k\in M_k(\Gamma_2)_{\mathbb{Q}}$. 
We set
\[
G_k:=-\frac{B_{k}B_{2k-2}}{4k(k-1)}E_k,
\]
where $B_m$ is the $m$-th Bernoulli number.

The first main result is as follows:
\begin{Thm}
\label{ThmM}
Let $(p,2k-2)$ be an irregular pair. Then there exists a cusp form 
$f\in M_k(\Gamma _2)_{\mathbb{Z}_{(p)}}$ such that 
\[
G_k\equiv f \bmod{p}.
\]   
\end{Thm}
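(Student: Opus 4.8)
The plan is to realize the desired cusp form as an explicit correction $f := G_k - \lambda H$, where $\lambda := B_{2k-2}/(2(k-1))$ is a scalar divisible by $p$ and $H$ is a $p$-integral degree-two form whose image under the Siegel $\Phi$-operator equals the $p$-integral normalized elliptic Eisenstein series. First I would isolate three ingredients: (A) $G_k \in M_k(\Gamma_2)_{\mathbb{Z}_{(p)}}$; (B) the image $\Phi(G_k)$ is a $p$-divisible multiple of a $p$-integral elliptic Eisenstein series; and (C) that elliptic series admits a $p$-integral preimage $H$ under $\Phi$. Granting these, $f$ is annihilated by $\Phi$, hence is a cusp form, lies in $M_k(\Gamma_2)_{\mathbb{Z}_{(p)}}$, and satisfies $f \equiv G_k \bmod p$ because $\lambda H \equiv 0 \bmod p$.

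For (B) I would use $\Phi(E_k^{(2)}) = E_k^{(1)}$, where $E_k^{(2)} = E_k$ is the paper's degree-two series and $E_k^{(1)} = 1 - (2k/B_k)\sum_{n\geq 1}\sigma_{k-1}(n)q^n$ is the normalized elliptic one. Setting $\mathcal{G} := -(B_k/2k)E_k^{(1)} = -B_k/(2k) + \sum_{n\geq 1}\sigma_{k-1}(n)q^n$, a short computation yields $\Phi(G_k) = \lambda\,\mathcal{G}$. The definition of an irregular pair forces $2 \le 2k-2 \le p-3$, so $p$ is odd, $p \nmid k(k-1)$, and by the von Staudt--Clausen theorem both $B_k$ and $B_{2k-2}$ are $p$-integral, while $p \mid B_{2k-2}$. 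Hence $\mathcal{G} \in M_k(\Gamma_1)_{\mathbb{Z}_{(p)}}$ and $\lambda \equiv 0 \bmod p$, which is exactly (B); the same bookkeeping shows $G_k$ has $p$-integral constant term, a consistency check for (A).

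For (A), the $p$-integrality of $G_k$, I would invoke the explicit formula for the Fourier coefficients of the degree-two Siegel--Eisenstein series (Kaufhold's, respectively Katsurada's, formulas), in which $a_{E_k}(T)$ for $\det T > 0$ is an arithmetic factor divided by an expression essentially of the shape $B_k B_{2k-2}$; the normalizing constant $-B_k B_{2k-2}/(4k(k-1))$ defining $G_k$ is tailored precisely to clear these denominators at $p$. This step is computational but standard, and under the constraint $k \le (p-1)/2$ no new $p$ enters the denominators.

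The main obstacle is (C): producing a $p$-integral $H \in M_k(\Gamma_2)_{\mathbb{Z}_{(p)}}$ with $\Phi(H) = \mathcal{G}$. The obvious preimage $-(B_k/2k)E_k^{(2)} = (2(k-1)/B_{2k-2})G_k$ fails to be $p$-integral precisely because $p \mid B_{2k-2}$ while $G_k \not\equiv 0 \bmod p$, so a genuinely different preimage is required. I would reduce (C) to the near-surjectivity of $\Phi \colon M_k(\Gamma_2)_{\mathbb{Z}_{(p)}} \to M_k(\Gamma_1)_{\mathbb{Z}_{(p)}}$, i.e. to the absence of $p$-torsion in the Eisenstein component of its cokernel. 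Equivalently, via the exact sequence $0 \to S_k(\Gamma_2)_{\mathbb{Z}_{(p)}} \to M_k(\Gamma_2)_{\mathbb{Z}_{(p)}} \xrightarrow{\Phi} M_k(\Gamma_1)_{\mathbb{Z}_{(p)}}$ and the Tor computation after $\otimes\,\mathbb{F}_p$, the class of $G_k \bmod p$ in $\ker\bar\Phi$ lifts to a characteristic-zero cusp form exactly when $\mathcal{G}$ lies in the $p$-integral image of $\Phi$. I expect to settle this by exhibiting a $p$-integral lift of the elliptic Eisenstein series, for instance through the Klingen--Eisenstein construction or a direct basis argument valid in the range $k \le (p-1)/2$; this is the step that will demand the most care.
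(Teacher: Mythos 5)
Your architecture tracks the paper's proof quite closely: your steps (A) and (B) are exactly what the authors verify, and your (A) is settled there by the Maass--B\"ocherer formula $a_{G_k}(T)=c(T)/(2D^{**}_{2k-2})$ with $c(T)\in\mathbb{Z}$ and $D^{**}_{2k-2}$ the denominator of $B_{2k-2}$, which is prime to $p$ because $p$ divides the numerator (equivalently $(p-1)\nmid(2k-2)$); your appeal to Kaufhold/Katsurada is the same mechanism, just not carried out. The genuine gap is the step you yourself flag, (C): you never actually produce the $p$-integral preimage $H$ of $\mathcal{G}$ under $\Phi$, and without it there is no proof. The paper closes exactly this hole with Lemma~\ref{Lem} (proved via its Hermitian twin, Lemma~\ref{Lem3}): for $p\ge 5$ one has $M_k(\Gamma_1)_{\mathbb{Z}_{(p)}}=\mathbb{Z}_{(p)}[E_4^{(1)},E_6^{(1)}]_k$, so the $\Phi$-image can be written as an isobaric polynomial $Q$ over $\mathbb{Z}_{(p)}$ in $E_4^{(1)},E_6^{(1)}$, and the correction term is obtained by substituting the degree-two Siegel--Eisenstein series $E_4,E_6$ for the variables. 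To run this you still owe two facts: that $\mathbb{Z}_{(p)}[E_4^{(1)},E_6^{(1)}]$ exhausts the $p$-integral level-one forms (true for $p\ge5$ since $1728$ is then a unit), and that the degree-two $E_4$ and $E_6$ are themselves $p$-integral (true for irregular $p$ by the same Maass--B\"ocherer formula, since the relevant Bernoulli numerators for weights $4$ and $6$ are $1,1,5$, prime to any $p\ge 37$; this is the Siegel analogue of the paper's Lemma~\ref{Lem5} and must be said). Your alternatives of "Klingen--Eisenstein construction" or an unspecified "exact sequence/Tor" argument do not by themselves deliver $p$-integrality.

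There is one structural difference worth noting, and it is in your favor once (C) is filled in as above. Because you factor $\Phi(G_k)=\lambda\mathcal{G}$ with $p\mid\lambda$ and $\mathcal{G}$ already $p$-integral, any $p$-integral preimage of $\mathcal{G}$ gives $\lambda H\equiv 0\bmod p$ for free. The paper instead takes a preimage of $\Phi(G_k)$ itself and must separately argue that the polynomial $Q$ reduces to $0$ in $\mathbb{F}_p[X,Y]$; this requires Swinnerton-Dyer's structure theorem for modular forms mod $p$ together with the bound $k<p-1$ (which the irregular-pair hypothesis $2k-2\le p-3$ supplies), since any nonzero isobaric relation mod $p$ has weight at least $p-1$. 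So your reorganization dispenses with the Swinnerton-Dyer input at no cost. But as written, the central existence statement (C) is asserted rather than proved, so the argument is incomplete.
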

\begin{Rem}
Let $p$ be a prime number. A pair $(p,m)$ is called $irregular$
if $1<m<p$ and $p|B_{m}$.\\
\end{Rem}

\subsection{The case of Hermitian modular forms}
The Hermitian upper half-space of degree $n$ is defined by
\[
\mathbb{ H}_n:=\{ Z\in M_n(\mathbb{C})\;|\; 
\tfrac{1}{2i}(Z-{}^t\overline{Z})>0\;  \},
\]
where ${}^t\overline{Z}$ is the transpose, complex conjugate of $Z$. 
The Siegel upper-half space $\mathbb{S}_n$ is a subdomain of $\mathbb{H}_n$.
Let $\boldsymbol{K}$ be an imaginary quadratic number field with discriminant 
$d_{\boldsymbol{K}}$ and ring of integers $\mathcal{O}_{\boldsymbol{K}}$. 
The Hermitian modular group $U_n(\mathcal{O}_{\boldsymbol{K}})$ is defined by
\[
U_n(\mathcal{O}_{\boldsymbol{K}}):=
\{ M\in M_n(\mathcal{O}_{\boldsymbol{K}})\;|\; {}^t\overline{M}J_n M=J_n\},\; 
J_n=\begin{pmatrix}0_n & -1_n \\ 1_n & 0_n\end{pmatrix} 
\]
Define a character $\nu _k$ on $U_n(\mathcal{O}_{\boldsymbol{K}})$ as follows:
\[
\nu_k:=
\begin{cases}
\text{det}^{\frac{k}{2}} & \text{if $d_{\boldsymbol{K}}=-4$},\\
\text{det}^{k} & \text{if $d_{\boldsymbol{K}}=-3$},\\
\text{trivial character} & \text{otherwise}.
\end{cases}
\]
We denote by $M_k(U_n(\mathcal{O}_{\boldsymbol{K}}), \nu _k)$ the space of 
Hermitian modular forms of weight $k$ and character $\nu_k$ with respect to 
$U_n(\mathcal{O}_{\boldsymbol{K}})$. Namely, it consists of holomorphic 
functions $F:\mathbb{H}_n\longrightarrow \mathbb{C}$ satisfying
\[
F\mid_kM(Z):=\text{det}(CZ+D)^{-k}F((AZ+B)(CZ+D)^{-1})=\nu _k(M)F(Z),
\]
for all $M=\left( \begin{smallmatrix}A &B \\ C & D \end{smallmatrix} 
\right) \in U_n({\mathcal O}_{\boldsymbol K})$.

A cusp form is characterized by the condition
\[
\Phi (F\mid_k\binom{\,\!{}^t\overline{U}\;\;0_n}{\;0_n\,\;U}) 
\equiv 0\quad \text{for}\;\text{all}\;
U\in GL_n(\boldsymbol{K})
\]
where $\Phi$ is the Siegel $\Phi$-operator. If the class number of ${\boldsymbol K}$ 
is $1$, this condition is equivalent to $\Phi (F)\equiv 0$. We denote by 
$S_k(U_n(\mathcal{O}_{\boldsymbol{K}}),\nu_k)$ the subspace consisting of cusp forms.

If $F\in M_k(U_n(\mathcal{O}_{\boldsymbol{K}}),\nu_k)$, then $F$ has a Fourier 
expansion of the form
\[
F(Z)=\sum_{0\leq H\in\Lambda_n(\boldsymbol{K})}a_F(H)q^H,\quad 
q^H:=e^{2\pi i\text{tr}(HZ)},\quad Z\in\mathbb{H}_n,
\]
where
\[
\Lambda_n(\boldsymbol{K}):=\{ H=(h_{ij})\in Her_n(\boldsymbol{K})
\;|\;h_{ii}\in\mathbb{Z},\sqrt{d_{\boldsymbol{K}}}h_{ij}
\in\mathcal{O}_{\boldsymbol{K}} \}.
\]
Let $R$ be a subring of $\mathbb{C}$. As in the case of Siegel modular forms, 
we define
\begin{align*}
&M_k(U_n(\mathcal{O}_{\boldsymbol{K}}),\nu _k)_R\\
&:=\{ F=\sum a_F(H)q^H\in M_k(U_n(\mathcal{O}_{\boldsymbol{K}}),\nu _k)\,|
\, a_F(H)\in R\ (\forall H\in \Lambda_n(\boldsymbol{K}))\}, \\
& S_k(U_n (\mathcal{O}_{\boldsymbol{K}}),\nu _k)_R:=
M_k(U_n(\mathcal{O}_{\boldsymbol{K}}),\nu _k)_R\cap 
S_k(U_n (\mathcal{O}_{\boldsymbol{K}}),\nu _k).
\end{align*}

We consider the Hermitian Eisenstein series of degree 2
\[
E_{k,\boldsymbol K}(Z)
:=\sum_{M=\left( \begin{smallmatrix} * & * \\ C & D \end{smallmatrix} \right)}
({\det}M)^{\frac{k}{2}}{\det}(CZ+D)^{-k},\quad 
Z\in\mathbb{H}_2,
\]
where $k>4$ is even and $M$ runs over a set of representatives of 
$\left\{ \left( \begin{smallmatrix} * & * \\ 0_2 & *\end{smallmatrix} \right) \right\}
\backslash U_2(\mathcal{O}_{\boldsymbol{K}})$. 
Then we have
\[
E_{k,\boldsymbol K}\in M_k(U_2(\mathcal{O}_{\boldsymbol{K}}),\nu _k)_{\mathbb{Q}}.
\]
Moreover, $E_{4,\boldsymbol K}\in M_4(U_2(\mathcal{O}_{\boldsymbol{K}}),\nu _4)$ 
is constructed by the Maass lift (\cite{Kri}). 

We set 
\[
G_{k,{\boldsymbol K}}
:=\frac{B_k\cdot B_{k-1,\chi _{\boldsymbol K}}}{4k(k-1)}E_{k,{\boldsymbol K}},
\]
where $B_{m,\chi_{\boldsymbol{K}}}$ is the generalized Bernoulli number 
with respect to the
Kronecker character $\chi_{\boldsymbol{K}}$ of $\boldsymbol{K}$. 

In this paper, we treat only the case that
\[
\text{the class number of $\boldsymbol{K}$ is one}. 
\]
\begin{Rem}
It is well known that, if the class number of $\boldsymbol{K}$ is one,
then $d_{\boldsymbol{K}}$ is one of
\[
-3,-4,-7,-8,-11,-19,-43,-67,-163.
\]
\end{Rem}
Our second main theorem is 
\begin{Thm}
\label{Thm2}
Assume that the class number of $\boldsymbol{K}$ is one and $k$ is even.
Suppose that a prime number $p$ satisfies the following conditions (A) and (B).
\vspace{2mm}
\\
~~~~~~~~~~~~~~~~(A) $p\nmid B_{3,\chi _{\boldsymbol K}}$ and 
$p\nmid B_{5,\chi _{\boldsymbol K}}$, 
\vspace{2mm}
\\
~~~~~~~~~~~~~~~~(B) $k<p-1$ and $p|B_{k-1,\chi _{{\boldsymbol K}}}$, 
\vspace{2mm}
\\
Then there exists a $non$-$trivial$ cusp form 
$f\in S_{k}(U_2({\mathcal O}),\nu _k)_{\mathbb{Z}_{(p)}}$ such that 
\[
G_{k,{\boldsymbol K}}\equiv f \bmod{p}.
\] 
\end{Thm}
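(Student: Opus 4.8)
The plan is to follow the template of Theorem~\ref{ThmM}: reduce the assertion to four tasks about the normalized Eisenstein series $G:=G_{k,\boldsymbol K}$, namely (a) $G\in M_k(U_2(\mathcal O_{\boldsymbol K}),\nu_k)_{\mathbb Z_{(p)}}$; (b) $G\not\equiv 0\bmod p$; (c) $\Phi(G)\equiv 0\bmod p$; and (d) a lifting step that converts (a)--(c) into a genuine cusp form $f$. Note first that hypotheses (A) and (B) are jointly consistent only for $k\ge 8$ (even), since (B) at $k=4,6$ would demand $p\mid B_{3,\chi_{\boldsymbol K}}$ or $p\mid B_{5,\chi_{\boldsymbol K}}$, contradicting (A); thus weights $4$ and $6$ play the role of excluded base weights, and $p\ge 11$.

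Task (c) is immediate. The Siegel $\Phi$-operator sends the degree-$2$ Hermitian Eisenstein series to the degree-$1$ one, which for class number one is the elliptic Eisenstein series $E_k=1-\tfrac{2k}{B_k}\sum_{n\ge1}\sigma_{k-1}(n)q^n$. Hence
\[
\Phi(G)=\frac{B_kB_{k-1,\chi_{\boldsymbol K}}}{4k(k-1)}\,E_k
=\frac{B_{k-1,\chi_{\boldsymbol K}}}{4(k-1)}\Bigl(\frac{B_k}{k}-2\sum_{n\ge1}\sigma_{k-1}(n)q^n\Bigr).
\]
Since $k<p-1$ gives $p-1\nmid k$, the von Staudt--Clausen theorem makes $B_k/k\in\mathbb Z_{(p)}$, while $k$ and $4(k-1)$ are $p$-units; so the bracketed form is $p$-integral and the prefactor is divisible by $p$ by (B). Thus $\Phi(G)\equiv 0\bmod p$, which by the equivalence recorded in the text is exactly the statement that $G$ is a cusp form modulo $p$.

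For (a) and (b) I would turn to the explicit formula for the Fourier coefficients of $E_{k,\boldsymbol K}$ (the Hermitian Siegel series; cf.\ Krieg and Nagaoka). The coefficients attached to degenerate $H$ (rank $\le 1$) reproduce, up to the prefactor, the elliptic data of (c) and are therefore $\equiv 0\bmod p$; the arithmetic lives in the positive-definite coefficients, which have the shape $a_{E_{k,\boldsymbol K}}(H)=\frac{4k(k-1)}{B_kB_{k-1,\chi_{\boldsymbol K}}}\,b(H)$ so that $a_G(H)=b(H)$ up to sign. One must then show $b(H)\in\mathbb Z_{(p)}$ for all $H$ and $b(H_0)\in\mathbb Z_{(p)}^{\times}$ for some $H_0$ of small determinant. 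This is where (A) is consumed: the closed form of $b(H)$ brings in $B_{3,\chi_{\boldsymbol K}}$ and $B_{5,\chi_{\boldsymbol K}}$ (equivalently, the weight-$4$ and weight-$6$ Eisenstein series $E_{4,\boldsymbol K},E_{6,\boldsymbol K}$, which under (A) are $p$-integral with non-zero reduction), and $p\nmid B_{3,\chi_{\boldsymbol K}},\,B_{5,\chi_{\boldsymbol K}}$ yields both the integrality and the non-vanishing mod $p$. Pinning down the exact shape of $b(H)$, and thus why precisely these two numbers intervene, is the most delicate computational point.

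The conceptual crux is (d), which simultaneously establishes the advertised existence of non-trivial cusp forms. I would work with the exact sequence of finitely generated $\mathbb Z_{(p)}$-modules
\[
0\longrightarrow S_k(U_2)_{\mathbb Z_{(p)}}\longrightarrow M_k(U_2)_{\mathbb Z_{(p)}}\xrightarrow{\ \Phi\ }N\longrightarrow 0,\qquad N:=\Phi\bigl(M_k(U_2)_{\mathbb Z_{(p)}}\bigr).
\]
Here $N$ is torsion-free, hence free, so the sequence splits and reduction modulo $p$ identifies $\ker\bar\Phi$ with $\overline{S_k(U_2)_{\mathbb Z_{(p)}}}$ \emph{provided} $N$ is $p$-saturated in $M_k(U_1)_{\mathbb Z_{(p)}}$. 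Granting this, (c) gives $\bar G\in\ker\bar\Phi=\overline{S_k}$, and (a)--(b) produce a non-zero $f\in S_k(U_2)_{\mathbb Z_{(p)}}$ with $G\equiv f\bmod p$. I expect the obstacle to be exactly the saturation, which concretely asks that $\tfrac1p\Phi(G)\in\Phi\bigl(M_k(U_2)_{\mathbb Z_{(p)}}\bigr)$, i.e.\ that the $p$-divisible part of the boundary already descends from a $p$-integral degree-$2$ form. I would attempt to secure it either by producing a $p$-integral Klingen-type Eisenstein series with the prescribed $\Phi$-image, or---more in keeping with the ``several variables'' theme---through the Hermitian Maass lift, under which the sought congruence is the image of the classical Ramanujan-type congruence governed by $B_{k-1,\chi_{\boldsymbol K}}$ in one variable, with (A) again guaranteeing that the lift stays $p$-integral and its output non-trivial.
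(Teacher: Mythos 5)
Your skeleton --- $p$-integrality of $G_{k,\boldsymbol K}$, vanishing mod $p$ of the singular Fourier coefficients, a lifting step to a true cusp form, and non-vanishing of some definite coefficient --- matches the paper, and your computation for step (c) is fine. But two of your four steps contain genuine gaps.

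First, the lifting step (d), which you leave unresolved (``I would attempt to secure it either by\dots''), is precisely the content of the paper's Lemma \ref{Lem3}, and it is resolved there by the first of your two suggested routes, carried out concretely: write $\Phi(g)=Q(E_4^{(1)},E_6^{(1)})$ for an isobaric $Q\in\mathbb Z_{(p)}[X,Y]$; since $\Phi(g)\equiv 0\bmod p$ and $k<p-1$, Swinnerton-Dyer's description of the ring of modular forms mod $p$ forces $\widetilde Q=0$ in $\mathbb F_p[X,Y]$ (no relation involving $E_{p-1}$ can occur in weight $k<p-1$); then $f:=g-Q(E_{4,\boldsymbol K},E_{6,\boldsymbol K})$ satisfies $\Phi(f)=0$ and $f\equiv g\bmod p$ because $E_{4,\boldsymbol K}$ and $E_{6,\boldsymbol K}$ lie in $M_{\bullet}(U_2(\mathcal O_{\boldsymbol K}),\nu_{\bullet})_{\mathbb Z_{(p)}}$. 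That last $p$-integrality is Lemma \ref{Lem5}, and \emph{this} is where condition (A) is consumed: the rank-$1$ coefficients of $E_{4,\boldsymbol K}$ and $E_{6,\boldsymbol K}$ carry $2k/B_k\in\mathbb Z$ and the rank-$2$ ones carry $1/(B_3\cdot B_{3,\chi_{\boldsymbol K}})$-type factors, so $p\nmid B_{3,\chi_{\boldsymbol K}},B_{5,\chi_{\boldsymbol K}}$ is exactly what is needed. Your saturation framing is not wrong, but without this construction the proof is incomplete.

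Second, your non-triviality argument (b) does not work as stated. You claim the definite coefficients $b(H)$ ``bring in $B_{3,\chi_{\boldsymbol K}}$ and $B_{5,\chi_{\boldsymbol K}}$'' and that (A) ``yields both the integrality and the non-vanishing mod $p$.'' In fact for rank-$2$ $H$ one has $a_{G_{k,\boldsymbol K}}(H)=\sum_{d\mid\varepsilon(H)}d^{k-1}G_{\boldsymbol K}\!\bigl(k-2,|d_{\boldsymbol K}|\det(H)/d^2\bigr)$, a rational integer in which no generalized Bernoulli number appears; (A) is irrelevant here, and integrality is automatic. The non-vanishing requires a separate argument, which the paper supplies: $a_{G_{k,\boldsymbol K}}(1_2)=1-|d_{\boldsymbol K}|^{k-2}$, and if this happens to be $\equiv 0\bmod p$ one instead takes $H=\left(\begin{smallmatrix}1&0\\0&q\end{smallmatrix}\right)$ for a prime $q$ with $\chi_{\boldsymbol K}(q)=-1$ and $q^{k-2}\not\equiv 1\bmod p$, whose existence (Lemma \ref{Lem2}) follows from the Chinese remainder theorem, a primitive root mod $p$, and Dirichlet's theorem, using crucially that $k-2<p-1$; then $a_{G_{k,\boldsymbol K}}(H)=(1-q^{k-2})(1+|d_{\boldsymbol K}|^{k-2})\equiv 2(1-q^{k-2})\not\equiv 0\bmod p$. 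This explicit number-theoretic input is missing from your proposal and cannot be replaced by an appeal to condition (A).
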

\begin{Cor}
\label{Cor1}
(1)\; Assume that $d_{\boldsymbol{K}}$ is one of $-3,-4,-7$. If $p$ 
satisfies condition (B), then 
there exists a $non$-$trivial$ cusp form 
$f\in S_{k}(U_2({\mathcal O}_{\boldsymbol K}),\nu _k)_{\mathbb{Z}_{(p)}}$ such that 
\[
G_{k,{\boldsymbol K}}\equiv f \bmod{p}.
\] 
(2j\; If $d_{\boldsymbol K}\neq -3$, then 
${\rm dim}S_{8}(U_2({\mathcal O}_{\boldsymbol K}),\nu _8)\ge 1$.  
\end{Cor}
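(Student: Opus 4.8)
The plan is to deduce both parts directly from Theorem \ref{Thm2}, the only genuinely new input being an explicit evaluation of the relevant generalized Bernoulli numbers.

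For part (1) I would observe that condition (A) is the only hypothesis of Theorem \ref{Thm2} not already present, so it suffices to show that (A) is automatic once (B) holds. Writing $f=\abs{d_{\boldsymbol K}}$ and using the standard formula
\[
B_{n,\chi_{\boldsymbol K}}=f^{\,n-1}\sum_{a=1}^{f}\chi_{\boldsymbol K}(a)\,B_n\!\left(\tfrac{a}{f}\right),
\]
with $B_n(x)$ the $n$-th Bernoulli polynomial, I would compute, for the three fields in question,
\[
B_{3,\chi_{\boldsymbol K}}=\tfrac{2}{3},\quad B_{5,\chi_{\boldsymbol K}}=-\tfrac{10}{3}\qquad(d_{\boldsymbol K}=-3),
\]
\[
B_{3,\chi_{\boldsymbol K}}=\tfrac{3}{2},\quad B_{5,\chi_{\boldsymbol K}}=-\tfrac{25}{2}\qquad(d_{\boldsymbol K}=-4),
\]
\[
B_{3,\chi_{\boldsymbol K}}=\tfrac{48}{7},\quad B_{5,\chi_{\boldsymbol K}}=-120\qquad(d_{\boldsymbol K}=-7).
\]
In every case the numerators are built only from the primes $2,3,5$. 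Since $k$ is even and at least $4$ (the range in which $G_{k,\boldsymbol K}$ is defined), condition (B) forces $p>k+1\ge 5$, so $p\ge 7$; therefore $p$ divides neither $B_{3,\chi_{\boldsymbol K}}$ nor $B_{5,\chi_{\boldsymbol K}}$, i.e. (A) holds. Theorem \ref{Thm2} then applies and gives the assertion.

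For part (2) I would take $k=8$, so that $k-1=7$ and (B) becomes $p\ge 11$ together with $p\mid B_{7,\chi_{\boldsymbol K}}$. For each discriminant $d_{\boldsymbol K}\ne -3$ of class number one I would evaluate $B_{7,\chi_{\boldsymbol K}}$ by the same formula and exhibit a prime $p\ge 11$ dividing its numerator but dividing neither $B_{3,\chi_{\boldsymbol K}}$ nor $B_{5,\chi_{\boldsymbol K}}$, so that both (A) and (B) are satisfied. For example, for $d_{\boldsymbol K}=-4$ one gets $B_{7,\chi_{\boldsymbol K}}=\tfrac{427}{2}=\tfrac{7\cdot 61}{2}$, and $p=61$ works. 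Theorem \ref{Thm2} then produces a non-trivial cusp form $f\in S_8(U_2(\mathcal{O}_{\boldsymbol K}),\nu_8)_{\mathbb{Z}_{(p)}}$, whence $\dim S_8(U_2(\mathcal{O}_{\boldsymbol K}),\nu_8)\ge 1$. The exclusion of $d_{\boldsymbol K}=-3$ is accounted for by $B_{7,\chi_{\boldsymbol K}}=\tfrac{98}{3}=\tfrac{2\cdot 7^{2}}{3}$, whose numerator has no prime factor $\ge 11$, so (B) is unsatisfiable at weight $8$.

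The main obstacle is the explicit evaluation of the generalized Bernoulli numbers, especially $B_{7,\chi_{\boldsymbol K}}$ for the larger discriminants $-19,-43,-67,-163$: there the numerators are large, and one must confirm both that a prime factor $p\ge 11$ exists and that this $p$ avoids the small numerators of $B_{3,\chi_{\boldsymbol K}}$ and $B_{5,\chi_{\boldsymbol K}}$, so that condition (A) is preserved. This is a finite but nontrivial arithmetic verification, and it is the only step beyond a direct appeal to Theorem \ref{Thm2}.
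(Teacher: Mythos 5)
Your proposal is correct and follows essentially the same route as the paper, which simply deduces both parts from Theorem \ref{Thm2} by consulting the tabulated values of $B_{n,\chi_{\boldsymbol K}}$ for $n=3,5,7$ (noting, as you do, that (B) forces $p\ge 7$ so (A) is automatic for $d_{\boldsymbol K}=-3,-4,-7$, and exhibiting for each $d_{\boldsymbol K}\neq -3$ a prime $p\ge 11$ dividing $B_{7,\chi_{\boldsymbol K}}$ but not $B_{3,\chi_{\boldsymbol K}}$ or $B_{5,\chi_{\boldsymbol K}}$). The only blemish is the value $B_{5,\chi_{\boldsymbol K}}=-120$ for $d_{\boldsymbol K}=-7$, which should be $-2^5\cdot 5=-160$; since the numerator still involves only the primes $2$ and $5$, the argument is unaffected.
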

\begin{Rem}
(1)\; It is known that there are non-trivial cusp forms of weight $10$ and $12$. \\
(2)\; As introduced by Hao-Parry \cite{Ha-Pa,Ha-Pa2}, the prime $p$ satisfying condition 
(B) is related to the divisibility by $p$ of the class number of a certain algebraic number field.
\end{Rem}
\section{Proof of theorems}
\subsection{Proof of Theorem \ref{ThmM}}
We start with proving that
\begin{Lem} 
\label{Lem}
Assume that $g=\sum a_g(T)q^T\in M_k(\Gamma _2)_{\mathbb{Z}_{(p)}}$ 
satisfies that $a_g(T)\equiv 0$ mod $p$ for all $T$ with $\det T=0$. 
Then there exists a cusp form $f\in S_k(\Gamma _2)_{\mathbb{Z}_{(p)}}$ 
such that
\[
g \equiv f \bmod{p}.
\]
\end{Lem}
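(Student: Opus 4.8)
The plan is to reduce the statement to the surjectivity of the Siegel $\Phi$-operator on $p$-integral forms and then to subtract a suitable multiple of $p$. First I would identify which Fourier coefficients $\Phi:M_k(\Gamma_2)\to M_k(\Gamma_1)$ actually detects. Writing $Z=\left(\begin{smallmatrix}\tau & z\\ z&\tau'\end{smallmatrix}\right)$ and letting $\mathrm{Im}\,\tau'\to\infty$, only the terms with $T=\mathrm{diag}(n,0)$ survive, so $\Phi(g)=\sum_{n\ge 0}a_g(\mathrm{diag}(n,0))\,q^{n}$. Every $T\ge 0$ with $\det T=0$ has rank $\le 1$ and is $GL_2(\mathbb{Z})$-equivalent to some $\mathrm{diag}(n,0)$, and $a_g({}^tU T U)=a_g(T)$ for $U\in GL_2(\mathbb{Z})$; hence the hypothesis says exactly that every Fourier coefficient of $\Phi(g)$ lies in $p\mathbb{Z}_{(p)}$. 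Since $\Phi(g)\in M_k(\Gamma_1)_{\mathbb{Z}_{(p)}}$, the form $F:=p^{-1}\Phi(g)$ again lies in $M_k(\Gamma_1)_{\mathbb{Z}_{(p)}}$.

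Next I would lift $F$ through $\Phi$ while keeping $p$-integrality. Here I use that $\Phi$ is a homomorphism of graded rings and that it sends the degree-$2$ Siegel--Eisenstein series to the elliptic one, $\Phi(E_j^{(2)})=E_j^{(1)}$ for $j=4,6$. For $p\ge 5$ one has $M_*(\Gamma_1)_{\mathbb{Z}_{(p)}}=\mathbb{Z}_{(p)}[E_4^{(1)},E_6^{(1)}]$ (note $\Delta=(E_4^{(1)3}-E_6^{(1)2})/1728$ is $p$-integral because $1728$ is a $p$-unit), so I may write $F=P(E_4^{(1)},E_6^{(1)})$ with $P$ a polynomial over $\mathbb{Z}_{(p)}$ homogeneous of weight $k$ (each monomial $E_4^{(1)a}E_6^{(1)b}$ satisfying $4a+6b=k$). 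Setting $h:=P(E_4^{(2)},E_6^{(2)})$ then gives $h\in M_k(\Gamma_2)_{\mathbb{Z}_{(p)}}$ with $\Phi(h)=P(E_4^{(1)},E_6^{(1)})=F$.

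Finally I put $f:=g-p\,h$. Then $f\in M_k(\Gamma_2)_{\mathbb{Z}_{(p)}}$ and $f\equiv g \bmod{p}$, while $\Phi(f)=\Phi(g)-p\,\Phi(h)=pF-pF=0$, so $f$ is a cusp form, $f\in S_k(\Gamma_2)_{\mathbb{Z}_{(p)}}$, which is the assertion. Equivalently, this is the splitting of the sequence $0\to S_k(\Gamma_2)_{\mathbb{Z}_{(p)}}\to M_k(\Gamma_2)_{\mathbb{Z}_{(p)}}\xrightarrow{\Phi} M_k(\Gamma_1)_{\mathbb{Z}_{(p)}}\to 0$ applied to the element $F$.

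The main obstacle is precisely the $p$-integral surjectivity of $\Phi$, which I have reduced to two inputs: the structure theorem $M_*(\Gamma_1)_{\mathbb{Z}_{(p)}}=\mathbb{Z}_{(p)}[E_4^{(1)},E_6^{(1)}]$, and the $p$-integrality of the degree-$2$ generators $E_4^{(2)},E_6^{(2)}$. The former requires $p\ge 5$, which is harmless since in the intended application $(p,2k-2)$ is an irregular pair and so $p\ge 691$. For the latter, $E_4^{(2)}$ is the degree-$2$ theta series of the $E_8$-lattice and hence has integral coefficients, whereas $E_6^{(2)}$ has denominators supported on finitely many small primes and is therefore $p$-integral for all large $p$. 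Making these denominators explicit, or instead invoking a known integral-surjectivity result for $\Phi$, is the one point where care is needed; everything else is formal.
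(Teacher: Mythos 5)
Your proof is correct and follows essentially the same route as the paper's (which is given as the proof of Lemma \ref{Lem3} in the Hermitian case and transported verbatim): express the $\Phi$-image as an isobaric polynomial in $E_4^{(1)},E_6^{(1)}$ over $\mathbb{Z}_{(p)}$, lift via the degree-$2$ Eisenstein series $E_4^{(2)},E_6^{(2)}$, and subtract. The only cosmetic difference is that you divide $\Phi(g)$ by $p$ before applying the structure theorem $M_*(\Gamma_1)_{\mathbb{Z}_{(p)}}=\mathbb{Z}_{(p)}[E_4^{(1)},E_6^{(1)}]$, whereas the paper writes $\Phi(g)=Q(E_4^{(1)},E_6^{(1)})$ and invokes Swinnerton-Dyer to conclude $\widetilde{Q}=0$; these are the same input, and you correctly flag the remaining point (the $p$-integrality of $E_4^{(2)},E_6^{(2)}$), which the paper handles in the Hermitian case via Lemma \ref{Lem5}.
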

\begin{proof}
Since the proof is same as in the case of Hermitian modular forms, 
we omit it. See the proof of Lemma \ref{Lem3} in $\S$ \ref{prf2}. 
\end{proof}

\begin{proof}[Proof of Theorem \ref{ThmM}]
We recall that the Fourier 
coefficients of the Eisenstein series $G_k$ are given as follows (e.g. cf. \cite{Ei-Za}):
\[
a_{G_k}(T)=
\begin{cases} 
\displaystyle 
-\frac{B_{k}\cdot B_{2k-2}}{4k(k-1)}\ & \text{if $T=0_2$},
\vspace{2mm}
\\ 
\displaystyle 
\frac{B_{2k-2}}{2k-2}\sigma _{k-1}(\varepsilon (T))\ & \text{if ${\rm rank}(T)=1$},
\vspace{2mm}
\\ 
\displaystyle 
\frac{B_{k-1,\chi _{D(T)}}}{k-1}
\sum _{d|\varepsilon (T)}d^{k-1}
\sum _{f|\frac{f(T)}{d}}\mu (f)\chi _{D(T)}(f)f^{k-2}\sigma _{2k-3}
\left(\frac{f(T)}{fd}\right)\ & \text{if\ ${\rm rank}(T)=2$},
\end{cases}
\]
where
\begin{align*}
& \text{$\varepsilon (T)$\,:\, the content of $T$, i.e. max$\{ l\in\mathbb{N}\,|\,l^{-1}T\in\Lambda_2\}$},\\
& \text{$\mu$\,:\, the M\"{o}bius function},\\
& \text{$D(T)$\,:\, the discriminant of the quadratic field $\mathbb{Q}(\sqrt{-4\det (T)})$},\\
& \text{$f(T)$\,:\, the natural number determined by $-4\det (T)=D(T)\cdot f(T)^2$},\\
& \text{$\chi_{D(T)}=\left(\frac{D(T)}{*} \right)$\,:\, the Kronecker character of $\mathbb{Q}(\sqrt{-4\det (T)})$},\\
& \text{$\sigma_m(N):=\sum_{0<d|N}d^m$}.
\end{align*}

We shall show that Fourier coefficients corresponding $T$ with $\text{rank}(T)\leq 1$
vanish modulo $p$. 
Note that $ p|\frac{B_{2k-2}}{2k-2}$ (by assumption),
and $\frac{B_{k}}{2k}\in \mathbb{Z}_{(p)}$ (by $k<p-1$
and von Staudt-Clausen's theorem). 
Hence, $a_{G_k}(0_2)\equiv 0 \bmod{p}$ and $a_{G_k}(T)\equiv 0 \bmod{p}$
for $T$ with $\text{rank}(T)=1$.

In order to complete the proof, we need to show that 
$a_{G_k}(T)\in \mathbb{Z}_{(p)}$ for all $T$ with ${\rm rank}(T)=2$. 
By a result of Maass \cite{Ma} (cf. also B\"ocherer \cite{Bo}), $a_{G_k}(T)$ 
can be written in the form 
\[
a_{G_k}(T)=\frac{1}{2D^{**}_{2k-2}}c(T),\quad\ \text{for some $c(T)\in \mathbb{Z}$},
\]
where $D^{**}_{2k-2}$ is the denominator of $B_{2k-2}$.
By the irregularity of $p$, the prime $p$ does not divide
$D^{**}_{2k-2}$. 
Therefore, we can take $g=G_k$ in Lemma \ref{Lem}.
This completes the proof of Theorem \ref{ThmM}.    
\end{proof}
\begin{Rem}
In  Theorem \ref{ThmM}, we do not refer to the non-triviality of the cusp form $f$
(compare with Theorem \ref{Thm2}). It is plausible that the non-triviality holds
for any prime $p$ satisfying the assumption of Theorem \ref{ThmM}.
Here we give a sufficient condition (see below $(*)$) for the non-triviality.

Under the same assumption as in Theorem \ref{ThmM}, we assume that
there exists a fundamental discriminant $D_0<0$ such that
\[
(*)\qquad\quad  p\nmid B_{k-1,\chi_{D_0}},
\]
then $f$ is non-trivial modulo $p$.

In \cite{Bru}, Bruinier proved that, if $p\nmid \frac{B_{2k-2}}{2k-2}$, then
there exists such $D_0$. However we cannot apply his result because
$p$ satisfies $p| \frac{B_{2k-2}}{2k-2}$ in our case.
\end{Rem}
\subsection{Proofs of Theorem \ref{Thm2} and Corollary \ref{Cor1}}
\label{prf2}
In this subsection, we prove Theorem \ref{Thm2} and Corollary \ref{Cor1}. 
Basically, the arguments are same as in the case of Siegel modular forms. 
First we prove 
\begin{Lem}
\label{Lem5}
Let $p$ be a prime satisfying condition (A) in Theorem \ref{Thm2}, 
namely, $p\nmid B_{3,\chi _{\boldsymbol K}}$ and 
$p\nmid B_{5,\chi _{\boldsymbol K}}$. 
Then we have
\[
E_{4,\boldsymbol K}\in 
M_{4}(U_2({\mathcal O}_{\boldsymbol K}),\nu _4)_{\mathbb{Z}_{(p)}}\;\; \text{and}\;\; 
E_{6,{\boldsymbol K}}\in 
M_{6}(U_2({\mathcal O}_{\boldsymbol K}),\nu _6)_{\mathbb{Z}_{(p)}}.
\]
\end{Lem}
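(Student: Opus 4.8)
The plan is to extract the Fourier coefficients of $E_{k,\boldsymbol K}$ for the two weights $k=4$ and $k=6$, to locate their denominators, and then to let condition (A) finish the argument; modularity over $\mathbb{Q}$ is already known, so only $p$-integrality of the coefficients is at stake. Since the class number of $\boldsymbol K$ is one, I would read off these coefficients from the explicit Fourier expansion of the degree-two Hermitian Eisenstein series: for $k=4$ this is furnished directly by the Maass lift (Krieg \cite{Kri}), and in both relevant weights the coefficients have the divisor-sum shape
\[
a_{E_{k,\boldsymbol K}}(H)=\sum_{d\mid\varepsilon(H)}d^{k-1}\,\alpha_k\!\left(\frac{|d_{\boldsymbol K}|\det H}{d^2}\right),
\]
where $\alpha_k(n)$ are the coefficients of the normalized elliptic Eisenstein series of weight $k-1$ with Kronecker character $\chi_{\boldsymbol K}$, so that $\alpha_k(0)=1$ and, for $n\ge 1$,
\[
\alpha_k(n)=-\frac{2(k-1)}{B_{k-1,\chi_{\boldsymbol K}}}\sum_{d\mid n}\chi_{\boldsymbol K}(d)\,d^{k-2}.
\]

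First I would dispose of the coefficients of low rank. For $H=0_2$ the lift gives $a_{E_{k,\boldsymbol K}}(0_2)=\alpha_k(0)=1$, and for $\mathrm{rank}(H)=1$ (so $\det H=0$) it gives $a_{E_{k,\boldsymbol K}}(H)=\sum_{d\mid\varepsilon(H)}d^{k-1}=\sigma_{k-1}(\varepsilon(H))\in\mathbb Z$. Thus all coefficients with $\det H=0$ are already integers and cause no trouble at $p$. For the positive definite part, $\det H>0$ makes the argument $|d_{\boldsymbol K}|\det H/d^2$ a positive integer, so $a_{E_{k,\boldsymbol K}}(H)$ is a $\mathbb Z$-linear combination of the $\alpha_k(n)$ with $n\ge 1$; by the displayed formula this shows $a_{E_{k,\boldsymbol K}}(H)\in\frac{1}{B_{k-1,\chi_{\boldsymbol K}}}\mathbb Z$, the numerator being an integer twisted divisor sum times $2(k-1)$.

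Consequently the only denominator occurring anywhere in $E_{k,\boldsymbol K}$ is $B_{k-1,\chi_{\boldsymbol K}}$. For $k=4$ this is $B_{3,\chi_{\boldsymbol K}}$ and for $k=6$ it is $B_{5,\chi_{\boldsymbol K}}$; condition (A) says precisely that $p$ divides neither, so $\tfrac{1}{B_{k-1,\chi_{\boldsymbol K}}}\in\mathbb Z_{(p)}$ and hence $a_{E_{k,\boldsymbol K}}(H)\in\mathbb Z_{(p)}$ for every $H$. Together with the known membership $E_{k,\boldsymbol K}\in M_k(U_2(\mathcal O_{\boldsymbol K}),\nu_k)_{\mathbb Q}$, this yields the two claimed memberships.

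The main obstacle I anticipate is pinning down the lift formula precisely enough to be certain that $B_{k-1,\chi_{\boldsymbol K}}$ is the sole denominator. In particular I must check that the discriminant factor $|d_{\boldsymbol K}|$ inside the argument of $\alpha_k$, the character values $\chi_{\boldsymbol K}(d)\in\{0,\pm1\}$, and the normalizing numerator $2(k-1)\in\{6,10\}$ contribute only integers, and that passing through Krieg's lift introduces no extra rational factor. A safe cross-check would be to compare the resulting coefficients against the explicit rank $\le 2$ formula used for the Siegel series in the proof of Theorem \ref{ThmM}; once that matches, the $p$-integrality is immediate from condition (A).
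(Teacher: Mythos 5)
Your argument is essentially the paper's: quote the explicit Krieg--Dern Fourier coefficient formula for $E_{k,\boldsymbol K}$, observe that the only possible denominators are generalized Bernoulli numbers, and invoke condition (A). One correction, though: in the paper's normalization (constant term $1$) the rank-one coefficients are $-\frac{2k}{B_k}\sigma_{k-1}(\varepsilon(H))$ and the rank-two coefficients carry the prefactor $\frac{4k(k-1)}{B_k\cdot B_{k-1,\chi_{\boldsymbol K}}}$ (with the integer-valued function $G_{\boldsymbol K}$ rather than a plain twisted divisor sum), so your assertion that $B_{k-1,\chi_{\boldsymbol K}}$ is the \emph{sole} denominator is not literally true --- $B_k$ also appears, and one must add the (easy, but necessary) observation that $\frac{2k}{B_k}\in\mathbb{Z}$ for $k=4,6$ (the familiar $240$ and $-504$), which is exactly the extra step the paper's proof records. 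Since that factor is integral in the two weights at issue and $G_{\boldsymbol K}(m,N)\in\mathbb{Z}$, your conclusion stands once the formula is corrected; this is precisely the ``extra rational factor from the lift'' you flagged as the point needing verification.
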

\begin{proof}[Proof of Lemma \ref{Lem5}]
The formula for the Fourier coefficient of $E_{k,\boldsymbol K}$ is given by 
Krieg \cite{Kri} and Dern \cite{Der,Der2} as follows:  
\begin{equation}
\label{Coeff}
a_{E_{k,{\boldsymbol K}}}(H)=
\begin{cases} 
\displaystyle 
1 \ & \text{if $H=0_2$},
\vspace{2mm}
\\ 
\displaystyle 
-\frac{2k}{B_k}\sigma _{k-1}(\varepsilon (H))\ & \text{if ${\rm rank}(H)=1$},
\vspace{2mm}
\\ 
\displaystyle 
\frac{4k(k-1)}{B_k\cdot B_{k-1,\chi _{\boldsymbol K}}}
\sum _{d|\varepsilon (H)}d^{k-1} G_{\boldsymbol K}
\!\left(k-2,\frac{|d_{\boldsymbol K}|\det (H)}{d^2}\right)
& \text{if\ ${\rm rank}(H)=2$},
\end{cases}
\end{equation}
where $\varepsilon (H)$ is the content of $H$, $\sigma _{m}(N):=\sum _{d|N}d^m$ and
\begin{align*}
& G_{\boldsymbol K}
\!(m,N):=\frac{1}{1+|\chi_{\boldsymbol{K}}(N)|}
(\sigma _{m,\chi_{\boldsymbol{K}}}(N)-\sigma ^*_{m,\chi_{\boldsymbol{K}}}(N)),
\vspace{2mm}
\\
&\sigma _{m,\chi_{\boldsymbol{K}}}(N):=\sum _{0<d|N}\chi_{\boldsymbol{K}}(d)d^{m},
\quad  \sigma ^*_{m,\chi_{\boldsymbol{K}}}(N):=\sum _{0<d|N}\chi_{\boldsymbol{K}}
(N/d)d^{m}.
\end{align*}
Note that $G_{\boldsymbol K}
\!(m,N)\in \mathbb{Z}$. If $k=4$ or $6$, then 
$\frac{2k}{B_k}\in \mathbb{Z}$. 
Moreover by the assumption, we have 
$\frac{1}{B_{3,\chi _{\boldsymbol K}}}$, 
$\frac{1}{B_{5,\chi _{\boldsymbol K}}}\in \mathbb{Z}_{(p)}$. 
Hence, the claim follows. 
\end{proof}
\begin{Lem} 
\label{Lem3}
Let $p$ be a prime satisfying condition (A) in Theorem \ref{Thm2}, 
namely, $p\nmid B_{3,\chi _{\boldsymbol K}}$ and $p\nmid B_{5,\chi _{\boldsymbol K}}$. 
Assume that 
$g=\sum a_g(H)q^H\in M_k(U_2({\mathcal O}_{\boldsymbol K}),\nu _k)_{\mathbb{Z}_{(p)}}$ 
satisfies that $a_g(H)\equiv 0 \bmod{p}$ for all $H$ with $\det H=0$. 
Then there exists a cusp form 
$f\in S_k(U_2({\mathcal O}_{\boldsymbol K}),\nu _k)_{\mathbb{Z}_{(p)}}$ such that 
$g \equiv f \bmod{p}$. 
\end{Lem}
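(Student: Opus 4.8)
The plan is to reduce to elliptic modular forms via the Siegel $\Phi$-operator and then to subtract off a $p$-integral correcting form that is manifestly divisible by $p$. First I would set $h:=\Phi(g)$. The Fourier coefficients of $h$ are among the $a_g(H)$ with $\det H=0$, so the hypothesis forces every coefficient of $h$ to be divisible by $p$; moreover $h$ is an elliptic modular form of weight $k$ lying in $M_k(\mathrm{SL}_2(\mathbb{Z}))_{\mathbb{Z}_{(p)}}$ (the image of $\Phi$ consists of degree-one, hence elliptic, modular forms, as is visible from the shape of the rank $\le 1$ coefficients). Consequently $h':=p^{-1}h$ again lies in $M_k(\mathrm{SL}_2(\mathbb{Z}))_{\mathbb{Z}_{(p)}}$.

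Next I would use that, for $p\ge 5$, one has $M_*(\mathrm{SL}_2(\mathbb{Z}))_{\mathbb{Z}_{(p)}}=\mathbb{Z}_{(p)}[E_4,E_6]$, where $E_4,E_6$ denote the classical elliptic Eisenstein series normalized to constant term $1$. Thus $h'=P(E_4,E_6)$ for some isobaric $P\in\mathbb{Z}_{(p)}[X,Y]$ of weight $k$. Replacing the elliptic generators by their Hermitian analogues, I set $G:=p\,P(E_{4,\boldsymbol K},E_{6,\boldsymbol K})$. By Lemma \ref{Lem5}, which is exactly where hypothesis (A) enters, the forms $E_{4,\boldsymbol K}$ and $E_{6,\boldsymbol K}$ are $p$-integral, so $G\in M_k(U_2(\mathcal{O}_{\boldsymbol K}),\nu_k)_{\mathbb{Z}_{(p)}}$, and plainly $G\equiv 0\bmod p$.

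The last step uses that $\Phi$ is a ring homomorphism and that $\Phi(E_{4,\boldsymbol K})=E_4$ and $\Phi(E_{6,\boldsymbol K})=E_6$ (the degree-one Hermitian Eisenstein series being the classical ones). Hence $\Phi(G)=p\,P(E_4,E_6)=p\,h'=h=\Phi(g)$, so $f:=g-G$ satisfies $\Phi(f)=0$ and is a cusp form in $S_k(U_2(\mathcal{O}_{\boldsymbol K}),\nu_k)_{\mathbb{Z}_{(p)}}$. Since $G\equiv 0\bmod p$, we obtain $f\equiv g\bmod p$, as required.

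The main obstacle is to produce a correcting form that is simultaneously $p$-integral and divisible by $p$. A naive $p$-integral preimage of $h=\Phi(g)$ under $\Phi$ need not be $\equiv 0\bmod p$, because the usual sections of $\Phi$ (Klingen--Eisenstein lifts) carry Bernoulli-type denominators; the device of first dividing $h$ by $p$, lifting the $p$-integral form $h'$ through the $p$-integral Hermitian Eisenstein series $E_{4,\boldsymbol K},E_{6,\boldsymbol K}$, and only then multiplying back by $p$ is what guarantees that $G$ is divisible by $p$. This is legitimate precisely because Lemma \ref{Lem5} (condition (A)) secures the $p$-integrality of those two generators. I would finally check the small primes $p=2,3$ separately, since there $\mathbb{Z}_{(p)}[E_4,E_6]$ need not exhaust the ring of $p$-integral elliptic modular forms, though condition (A) normally excludes these.
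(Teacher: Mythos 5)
Your argument is correct and is essentially the paper's own proof: both lift $\Phi(g)$ to $Q(E_{4,\boldsymbol K},E_{6,\boldsymbol K})$ for an isobaric weight-$k$ polynomial $Q$ whose coefficients lie in $p\mathbb{Z}_{(p)}$, invoke Lemma \ref{Lem5} (condition (A)) for the $p$-integrality of $E_{4,\boldsymbol K}$ and $E_{6,\boldsymbol K}$, and set $f=g-Q(E_{4,\boldsymbol K},E_{6,\boldsymbol K})$, which is cuspidal since $\Phi$ kills it and is congruent to $g$ since $Q\equiv 0\bmod p$. The only cosmetic difference is how the divisibility of the coefficients of $Q$ is obtained: you divide $\Phi(g)$ by $p$ before applying the structure theorem $M_k(SL_2(\mathbb{Z}))_{\mathbb{Z}_{(p)}}=\mathbb{Z}_{(p)}[E_4,E_6]_k$ (valid for $p\ge 5$, a caveat the paper's proof shares), whereas the paper writes $\Phi(g)=Q(E_4^{(1)},E_6^{(1)})$ directly and cites Swinnerton-Dyer to conclude $\widetilde{Q}=0$ in $\mathbb{F}_p[X,Y]$.
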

\begin{proof}[Proof of Lemma \ref{Lem3}]
Take a polynomial $Q(X,Y)\in \mathbb{Z}_{(p)}[X,Y]$ such that 
$\Phi (g)=Q(E^{(1)}_4,E^{(1)}_6)$, where $E_4^{(1)}$ and $E_6^{(1)}$ are 
the normalized elliptic Eisenstein series of weight $4$ and $6$, respectively. 
Then the polynomial $Q$ must satisfy $\widetilde{Q}=0$ in $\mathbb{F}_p[X,Y]$ 
by the result of Swinnerton-Dyer \cite{Swi}. Now we consider 
$f:=g-Q(E_{4,\boldsymbol K},E_{6,{\boldsymbol K}})$. We see that 
$f\in S_k(U_2({\mathcal O}_{\boldsymbol K}),\nu _k)$ because of $\Phi (f)=0$. 
Moreover, by Lemma \ref{Lem5}, 
$E_{4,\boldsymbol K}\in M_{4}(U_2({\mathcal O}_{\boldsymbol K}),\nu _4)_{\mathbb{Z}_{(p)}}$ 
and 
$E_{6,{\boldsymbol K}}\in M_{6}(U_2({\mathcal O}_{\boldsymbol K}),\nu _6)_{\mathbb{Z}_{(p)}}$. 
Therefore, $Q(E_{4,\boldsymbol K},E_{6,{\boldsymbol K}})\equiv 0 \bmod{p}$, and 
hence $g \equiv f \bmod{p}$.     
\end{proof}
 
We return to the proof of Theorem \ref{Thm2}. 
\begin{proof}[Proof of Theorem \ref{Thm2}]
The Fourier coefficient of $G_{k,\boldsymbol K}$ is given as follows:
\[
a_{G_{k,{\boldsymbol K}}}(H)=
\begin{cases} 
\displaystyle\frac{B_k\cdot B_{k-1,\chi _{\boldsymbol K}}}{4k(k-1)} & \text{if $H=0_2$},
\vspace{2mm}
\\ 
\displaystyle-\frac{B_{k-1,\chi _{\boldsymbol K}}}{2k-2}\sigma _{k-1}(\varepsilon (H)) &
\text{if rank$(H)=1$},
\vspace{2mm}
\\ 
\displaystyle\sum _{d|\varepsilon (H)}d^{k-1} G_{\boldsymbol K}\!
\left(k-2,\frac{|d_{\boldsymbol K}|\det (H)}{d^2}\right)
& \text{if rank$(H)=2$},
\end{cases}
\]
where the notation is as in (\ref{Coeff}).

From the assumption $k<p-1$ and von Staudt-Clausen's theorem, we have
$\frac{B_k}{2k}\in \mathbb{Z}_{(p)}$.
Hence, $p\,|\,\frac{B_k\cdot B_{k-1,\chi _{\boldsymbol K}}}{4k(k-1)}$ 
and $p\,|\,\frac{B_{k-1,\chi _{\boldsymbol K}}}{2k-2}$. 
These facts imply that $a_{G_{k,{\boldsymbol K}}}(0_2)\equiv 0 \bmod{p}$ for all $H$
and $a_{G_{k,{\boldsymbol K}}}(H)\equiv 0 \bmod{p}$ 
with ${\rm rank}(H)=1$. For $H$ with ${\rm rank }(H)=2$, 
$a_{G_{k,\boldsymbol K}}(H)\in \mathbb{Z}$. 
Namely, 
$G_{k,{\boldsymbol K}}\in 
M_{k}(U_2({\mathcal O}_{{\boldsymbol K}}),\nu _k)_{\mathbb{Z}_{(p)}}$ 
and this implies that $a_{G_{k,\boldsymbol K}}(H)\equiv 0 \bmod{p}$ 
for all $H$ with $\det H=0$. 
Applying Lemma \ref{Lem3} to $G_{k,\boldsymbol K}$, there exists a cusp form such that 
$G_{k,\boldsymbol K}\equiv f \bmod{p}$.

In order to complete the proof of Theorem \ref{Thm2}, we need to show the non-triviality of $f$. 
The proof is reduced to proving that there exists $H$ with ${\rm rank}(H)=2$ which 
satisfies $a_{G_{k,{\boldsymbol K}}}(H)\not\equiv 0 \bmod{p}$. First, 
we consider the Fourier coefficient corresponding $H=1_2$ (the unit matrix of degree 2), 
which implies 
$a_{G_{k,{\boldsymbol K}}}(1_2)=G_{\boldsymbol K}(k-2,|d_{\boldsymbol K}|)=1-|d_{\boldsymbol K}|^{k-2}$. 
If $|d_{\boldsymbol K}|^{k-2}\not \equiv 1 \bmod{p}$, then 
$a_{G_{k,{\boldsymbol K}}}(1_2)\not \equiv 0 \bmod{p}$. This shows the non-triviality of
$G_{k,{\boldsymbol K}}$ in this case.
Hence, it suffices to consider the case that $|d_{\boldsymbol K}|^{k-2}\equiv 1 \bmod{p}$. 
Note that $(p,|d_{\boldsymbol K}|)=1$ in this case. Now we prove
\begin{Lem}
\label{Lem2}
Assume that $k-2<p-1$ and the prime $p$ does not divide $d_{\boldsymbol K}$. 
Then there exists a prime $q$ such that 
$\chi _{\boldsymbol K}(q)=-1$ and $q^{k-2}\not \equiv 1 \bmod{p}$. 
\end{Lem}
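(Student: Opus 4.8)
The plan is to exploit the fact that the two required conditions on $q$ are governed by its residue class modulo two coprime integers: the value $\chi_{\boldsymbol{K}}(q)$ depends only on $q \bmod |d_{\boldsymbol{K}}|$ (the Kronecker character $\chi_{\boldsymbol{K}}$ being periodic modulo $|d_{\boldsymbol{K}}|$), while $q^{k-2} \bmod p$ depends only on $q \bmod p$. Since $p \nmid d_{\boldsymbol{K}}$, the moduli $p$ and $|d_{\boldsymbol{K}}|$ are coprime, so these two conditions can be prescribed independently and then realized simultaneously by a prime, via the Chinese Remainder Theorem together with Dirichlet's theorem on primes in arithmetic progressions.

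First I would produce the two target residues separately. Because $\boldsymbol{K}$ is a genuine imaginary quadratic field, $\chi_{\boldsymbol{K}}$ is a non-trivial quadratic Dirichlet character modulo $|d_{\boldsymbol{K}}|$; hence there exists $b_0 \in (\mathbb{Z}/|d_{\boldsymbol{K}}|\mathbb{Z})^{\times}$ with $\chi_{\boldsymbol{K}}(b_0) = -1$. Next, let $a_0$ be a primitive root modulo $p$, so that $a_0$ has order $p-1$ in $(\mathbb{Z}/p\mathbb{Z})^{\times}$. In the setting of Theorem \ref{Thm2} the weight $k$ is even with $k \ge 4$, so $1 \le k-2$; combined with the hypothesis this gives $1 \le k-2 < p-1$, whence $p-1 \nmid k-2$ and therefore $a_0^{k-2} \not\equiv 1 \bmod{p}$.

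Finally, using $\gcd(p,|d_{\boldsymbol{K}}|)=1$, the Chinese Remainder Theorem yields an integer $c$, coprime to $p|d_{\boldsymbol{K}}|$, with $c \equiv a_0 \bmod{p}$ and $c \equiv b_0 \bmod{|d_{\boldsymbol{K}}|}$. By Dirichlet's theorem there are infinitely many primes $q \equiv c \bmod{p|d_{\boldsymbol{K}}|}$; choosing any one of them, the periodicity of $\chi_{\boldsymbol{K}}$ gives $\chi_{\boldsymbol{K}}(q) = \chi_{\boldsymbol{K}}(b_0) = -1$, while $q \equiv a_0 \bmod{p}$ gives $q^{k-2} \equiv a_0^{k-2} \not\equiv 1 \bmod{p}$, as required. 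The argument is essentially elementary, and I do not expect a serious obstacle; the only points demanding care are verifying that $\chi_{\boldsymbol{K}}$ really is non-trivial (so that the value $-1$ is attained) and that $a_0^{k-2}\not\equiv 1$, which is precisely where the hypothesis $k-2<p-1$ enters. The coprimality $p\nmid d_{\boldsymbol{K}}$ is what makes the two congruence conditions independent, and is therefore essential for decoupling them.
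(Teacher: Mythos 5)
Your proof is correct and follows essentially the same route as the paper's: a primitive root modulo $p$ to guarantee $q^{k-2}\not\equiv 1 \bmod p$ (using $1\le k-2<p-1$), combined via the Chinese Remainder Theorem and Dirichlet's theorem with a residue class modulo $|d_{\boldsymbol K}|$ on which $\chi_{\boldsymbol K}$ equals $-1$. The only cosmetic difference is that the paper takes that residue to be $-1$ explicitly (since $\chi_{\boldsymbol K}$ is odd for an imaginary quadratic field), whereas you invoke the non-triviality of the character abstractly; both justifications are valid.
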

\begin{proof}[Proof of Lemma \ref{Lem2}]
Let $\alpha $ be an integer such that 
$(\mathbb{Z}/p\mathbb{Z})^{\times }=\left< \overline{\alpha} \right>$. 
Applying the Chinese remainder theorem, we can find an integer $a$ such that 
$a\equiv \alpha \bmod{p}$ and $a\equiv -1 \bmod{|d_{\boldsymbol K}|}$. 
It is clear that $(a,p\cdot |d_{\boldsymbol K}|)=1$ for such $a$. 
By Dirichlet's theorem on arithmetic progressions, there exist infinitely 
many primes in the sequence $\{a+p\cdot |d_{\boldsymbol K}|\cdot n\}_{n=1}^\infty$. 
We take a prime $q$ appearing in this sequence. 
Then $q$ satisfies both $\chi _{\boldsymbol K}(q)=-1$ and 
$q^{k-2}\not \equiv 1 \bmod{p}$.
In fact, $q\equiv a \equiv \alpha \bmod{p}$, and hence $q^{k-2}\not\equiv 1 \bmod{p}$ 
because $k-2<p-1$. 
On the other hand, by $q\equiv a\equiv -1$ mod $|d_{\boldsymbol K}|$, we have 
$\chi _{\boldsymbol K}(q)=-1$.  
\end{proof}

We return to the proof of the non-triviality of $f$.
Note that $k-2<p-1$ by the assumption in Theorem \ref{Thm2}. 
By Lemma \ref{Lem2}, there exists a prime $q$ such that $\chi _{{\boldsymbol K}}(q)=-1$ 
and $q^{k-2}\not \equiv 1 \bmod{p}$. 
Considering the case $H=\left( \begin{smallmatrix} 1 & 0 \\ 0 & q \end{smallmatrix}\right)$, 
we have 
\begin{align*}
a_{G_{k,{\boldsymbol K}}}(H)&=G_{\boldsymbol K}\!
(k-2,|d_{\boldsymbol K}|q)\\
&=1-q^{k-2}+|d_{\boldsymbol K}|^{k-2}-|d_{\boldsymbol K}|^{k-2}q^{k-2}\\
&=(1-q^{k-2})(1+|d_{\boldsymbol K}|^{k-2})\\
&\equiv 2(1-q^{k-2}) \bmod{p}. 
\end{align*} 
By the choice of $q$, this is not $0$ modulo $p$. 
This completes the proof of Theorem \ref{Thm2}. 
\end{proof}
\begin{proof}[Proof of Corollary \ref{Cor1}]
See tables in $\S$ \ref{Table}. \\
(1)\; For the cases $d_{\boldsymbol{K}}=-3,-4,-7$, condition (A) is always satisfied.
\\
(2)\; If $d_{\boldsymbol K}\neq -3$ and $k=8$, then there exists a prime $p$ satisfying conditions (A) and (B). 
Thus we the assertion of Corollary \ref{Cor1} (2) is obtained. 
\end{proof}

\section{Examples}
\subsection{The case of Siegel modular forms}
We set
\begin{equation*}
\begin{split}
X_{10}:&=-\frac{43867}{2^{10}\cdot 3^5\cdot 5^2\cdot 7\cdot 53}(E_{10}-E_4E_6), \\
X_{12}:&=\frac{131\cdot 593}{2^{11}\cdot 3^6\cdot 5^3\cdot 7^2\cdot 337}
(3^2\cdot 7^2E_4^3+2\cdot 5^3E_6^2-691E_{12}).
\end{split}
\end{equation*}
The following are cusp forms defined by Igusa. 
\subsubsection*{The case of weight 10}
\[
G_{10} \equiv 11313\cdot X_{10} \bmod{43867}.
\]
\begin{align*}
 & a_{G_{10}}\!\!\left(\begin{pmatrix}
1 & \tfrac{1}{2}\\
\tfrac{1}{2} & 1
\end{pmatrix}  
\right)
=-\frac{1618}{27}  \equiv  11313=
11313\cdot
a_{X_{10}}\!\!\left(\begin{pmatrix}
1 & \tfrac{1}{2}\\
\tfrac{1}{2} & 1
\end{pmatrix}  
\right)
\bmod 43867,\\
 & a_{G_{10}}\!\!\left(\begin{pmatrix}
1 & 0\\
0 & 1
\end{pmatrix}  
\right)
=-\frac{1385}{2}  \equiv  11313\cdot (-2)=
11313\cdot
a_{X_{10}}\!\!\left(\begin{pmatrix}
1 & 0\\
0 & 1
\end{pmatrix}  
\right)
\bmod 43867,\\
 & a_{G_{10}}\!\!\left(\begin{pmatrix}
1 & \tfrac{1}{2}\\
\tfrac{1}{2} & 2
\end{pmatrix}  
\right)
=-\frac{565184}{7}  \equiv  11313\cdot (-16)=
11313\cdot
a_{X_{10}}\!\!\left(\begin{pmatrix}
1 & \tfrac{1}{2}\\
\tfrac{1}{2} & 2
\end{pmatrix}    
\right)
\bmod 43867,\\
 & a_{G_{10}}\!\!\left(\begin{pmatrix}
1 & 0\\
0 & 2
\end{pmatrix}  
\right)
=-250737  \equiv  11313\cdot 36=
11313\cdot
a_{X_{10}}\!\!\left(\begin{pmatrix}
1 & 0\\
0 & 2
\end{pmatrix}  
\right)
\bmod 43867.
\end{align*}
\subsubsection*{The case of weight 12}
\[
G_{12} \equiv 53020\cdot X_{12} \bmod{131\cdot 593}.
\]
\begin{align*}
 & a_{G_{12}}\!\!\left(\begin{pmatrix}
1 & \tfrac{1}{2}\\
\tfrac{1}{2} & 1
\end{pmatrix}  
\right)
=\frac{3694}{3}  \equiv  53020=
53020\cdot
a_{X_{12}}\!\!\left(\begin{pmatrix}
1 & \tfrac{1}{2}\\
\tfrac{1}{2} & 1
\end{pmatrix}  
\right)
\bmod 131\cdot 593,\\
 & a_{G_{12}}\!\!\left(\begin{pmatrix}
1 & 0\\
0 & 1
\end{pmatrix}  
\right)
=\frac{5052}{2}  \equiv  53020\cdot 10=
53020\cdot
a_{X_{12}}\!\!\left(\begin{pmatrix}
1 & 0\\
0 & 1
\end{pmatrix}  
\right)
\bmod 131\cdot 593,\\
 & a_{G_{12}}\!\!\left(\begin{pmatrix}
1 & \tfrac{1}{2}\\
\tfrac{1}{2} & 2
\end{pmatrix}  
\right)
=9006448  \equiv 53020\cdot (-88)=
53020\cdot
a_{X_{12}}\!\!\left(\begin{pmatrix}
1 & \tfrac{1}{2}\\
\tfrac{1}{2} & 2
\end{pmatrix}    
\right)
\bmod 131\cdot 593,\\
 & a_{G_{12}}\!\!\left(\begin{pmatrix}
1 & 0\\
0 & 2
\end{pmatrix}  
\right)
=36581523  \equiv  53020\cdot (-132)=
53020\cdot
a_{X_{12}}\!\!\left(\begin{pmatrix}
1 & 0\\
0 & 2
\end{pmatrix}  
\right)
\bmod 131\cdot 593.
\end{align*}
\subsection{The case of Hermitian modular forms}
In this subsection, we introduce congruences for
Hermitian modular forms. We use modular forms constructed
in \cite{Kiku-Nag}.
\subsubsection*{Examples for $d_{\boldsymbol K}=-3$}
We set
\begin{equation*}
\begin{split}
F_{10}&= F_{10,\mathbb{Q}(\sqrt{-3})}
:=-\frac{809}{21772800}
(E_{10,\mathbb{Q}(\sqrt{-3})}-E_{4,\mathbb{Q}(\sqrt{-3})}\cdot
E_{6,\mathbb{Q}(\sqrt{-3})}),\\
F_{12}&= F_{12,\mathbb{Q}(\sqrt{-3})}
:=-\frac{1276277}{36578304000}
\left(E_{12,\mathbb{Q}(\sqrt{-3})}-\frac{441}{691}E_{4,\mathbb{Q}(\sqrt{-3})}^3
-\frac{250}{691}
E_{6,\mathbb{Q}(\sqrt{-3})}^2\right)
\end{split}
\end{equation*}
(cf. \cite{Kiku-Nag}, Lemma 4.8 and Lemma 4.9).
\subsubsection*{The case of weight 10}
\[
G_{10} \equiv 554\cdot F_{10} \bmod{809},
\]
where $G_{10}=G_{10,\mathbb{Q}(\sqrt{-3})}$.
\begin{align*}
 & a_{G_{10}}\!\!\left(\begin{pmatrix}
1 & \tfrac{1}{\sqrt{3}i}\\
-\tfrac{1}{\sqrt{3}i} & 1
\end{pmatrix}  
\right)
=-255  \equiv  554=
554\cdot
a_{F_{10}}\!\!\left(\begin{pmatrix}
1 & \tfrac{1}{\sqrt{3}i}\\
-\tfrac{1}{\sqrt{3}i} & 1
\end{pmatrix}   
\right)
\bmod 809,\\
& a_{G_{10}}\!\!\left(\begin{pmatrix}
1 & 0\\
0 & 1
\end{pmatrix}  
\right)
=-6560  \equiv  554\cdot (-6)=
554\cdot
a_{F_{10}}\!\!\left(\begin{pmatrix}
1 & 0\\
0 & 1
\end{pmatrix}  
\right)
\bmod 809,\\
& a_{G_{10}}\!\!\left(\begin{pmatrix}
1 & \tfrac{1}{\sqrt{3}i}\\
-\tfrac{1}{\sqrt{3}i} & 2
\end{pmatrix}  
\right)
=-390624  \equiv  554\cdot (-10)=
554\cdot
a_{F_{10}}\!\!\left(\begin{pmatrix}
1 & \tfrac{1}{\sqrt{3}i}\\
-\tfrac{1}{\sqrt{3}i} & 2
\end{pmatrix}   
\right)
\bmod 809,\\
& a_{G_{10}}\!\!\left(\begin{pmatrix}
1 & 0\\
0 & 2
\end{pmatrix}  
\right)
=-1673310  \equiv  554\cdot 90=
554\cdot
a_{F_{10}}\!\!\left(\begin{pmatrix}
1 & 0\\
0 & 2
\end{pmatrix}  
\right)
\bmod 809.
\end{align*}
\subsubsection*{The case of weight 12}
\[
G_{12} \equiv 824\cdot F_{12} \bmod{1847},
\]
where $G_{12}=G_{12,\mathbb{Q}(\sqrt{-3})}$.
\begin{align*}
 & a_{G_{12}}\!\!\left(\begin{pmatrix}
1 & \tfrac{1}{\sqrt{3}i}\\
-\tfrac{1}{\sqrt{3}i} & 1
\end{pmatrix}  
\right)
=-1023  \equiv 824=
824\cdot
a_{F_{12}}\!\!\left(\begin{pmatrix}
1 & \tfrac{1}{\sqrt{3}i}\\
-\tfrac{1}{\sqrt{3}i} & 1
\end{pmatrix}   
\right)
\bmod 1847,\\
& a_{G_{12}}\!\!\left(\begin{pmatrix}
1 & 0\\
0 & 1
\end{pmatrix}  
\right)
=-59048  \equiv  824\cdot 18=
824\cdot
a_{F_{12}}\!\!\left(\begin{pmatrix}
1 & 0\\
0 & 1
\end{pmatrix}  
\right)
\bmod 1847,\\
& a_{G_{12}}\!\!\left(\begin{pmatrix}
1 & \tfrac{1}{\sqrt{3}i}\\
-\tfrac{1}{\sqrt{3}i} & 2
\end{pmatrix}  
\right)
=-9765624  \equiv  824\cdot (-106)=
824\cdot
a_{F_{12}}\!\!\left(\begin{pmatrix}
1 & \tfrac{1}{\sqrt{3}i}\\
-\tfrac{1}{\sqrt{3}i} & 2
\end{pmatrix}   
\right)
\bmod 1847,\\
& a_{G_{12}}\!\!\left(\begin{pmatrix}
1 & 0\\
0 & 2
\end{pmatrix}  
\right)
=-60408150  \equiv  824\cdot (-54)=
824\cdot
a_{F_{12}}\!\!\left(\begin{pmatrix}
1 & 0\\
0 & 2
\end{pmatrix}  
\right)
\bmod 1847.
\end{align*}
\subsubsection*{Examples for $d_{\boldsymbol K}=-4$}
We set
\begin{equation*}
\begin{split}
\chi_8&
:=-\frac{61}{230400}
(E_{8,\mathbb{Q}(\sqrt{-1})}-E_{4,\mathbb{Q}(\sqrt{-1})}^2),
\\
F_{10}&= F_{10,\mathbb{Q}(\sqrt{-1})}
:=-\frac{277}{2419200}
(E_{10,\mathbb{Q}(\sqrt{-1})}-E_{4,\mathbb{Q}(\sqrt{-1})}
E_{6,\mathbb{Q}(\sqrt{-1})})
\end{split}
\end{equation*}
(cf. \cite{Kiku-Nag}, Lemma 4.3 and Lemma 4.4).\\
\subsubsection*{The case of weight 8}
\[
G_8 \equiv -2\cdot \chi_8 \bmod{61},
\]
where $G_8=G_{8,\mathbb{Q}(\sqrt{-1})}$.
\begin{align*}
 & a_{G_8}\!\!\left(\begin{pmatrix}
1 & \tfrac{1+i}{2}\\
\tfrac{1-i}{2} & 1
\end{pmatrix}  
\right)
=-63  \equiv  -2=
-2\cdot
a_{\chi_8}\!\!\left(\begin{pmatrix}
1 & \tfrac{1+i}{2}\\
\tfrac{1-i}{2} & 1
\end{pmatrix}   
\right)
\bmod 61,\\
& a_{G_8}\!\!\left(\begin{pmatrix}
1 & \tfrac{1}{2}\\
\tfrac{1}{2} & 1
\end{pmatrix}  
\right)
=-728  \equiv  -2\cdot (-2)=
-2\cdot
a_{\chi_8}\!\!\left(\begin{pmatrix}
1 & \tfrac{1}{2}\\
\tfrac{1}{2}& 1
\end{pmatrix}  
\right)
\bmod 61,\\
& a_{G_8}\!\!\left(\begin{pmatrix}
1 & 0\\
0 & 1
\end{pmatrix}  
\right)
=-4095  \equiv  -2\cdot 4=
-2\cdot
a_{\chi_8}\!\!\left(\begin{pmatrix}
1 & 0\\
0 & 1
\end{pmatrix}   
\right)
\bmod 61,\\
& a_{G_8}\!\!\left(\begin{pmatrix}
1 & \tfrac{1+i}{2}\\
\tfrac{1-i}{2} & 2
\end{pmatrix}  
\right)
=-47320  \equiv  -2\cdot (-8)=
-2\cdot
a_{\chi_8}\!\!\left(\begin{pmatrix}
1 & \tfrac{1+i}{2}\\
\tfrac{1-i}{2} & 2
\end{pmatrix}  
\right)
\bmod 61.
\end{align*}
\subsubsection*{The case of weight 10}
\[
G_{10} \equiv 22\cdot F_{10} \bmod{277},
\]
where $G_{10}=G_{10,\mathbb{Q}(\sqrt{-1})}$.
\begin{align*}
 & a_{G_{10}}\!\!\left(\begin{pmatrix}
1 & \tfrac{1+i}{2}\\
\tfrac{1-i}{2} & 1
\end{pmatrix}  
\right)
=-255  \equiv  22=
22\cdot
a_{F_{10}}\!\!\left(\begin{pmatrix}
1 & \tfrac{1+i}{2}\\
\tfrac{1-i}{2} & 1
\end{pmatrix}   
\right)
\bmod 277,\\
& a_{G_{10}}\!\!\left(\begin{pmatrix}
1 & \tfrac{1}{2}\\
\tfrac{1}{2} & 1
\end{pmatrix}  
\right)
=-6560  \equiv  22\cdot 4=
22\cdot
a_{F_{10}}\!\!\left(\begin{pmatrix}
1 & \tfrac{1}{2}\\
\tfrac{1}{2}& 1
\end{pmatrix}  
\right)
\bmod 277,\\
& a_{G_{10}}\!\!\left(\begin{pmatrix}
1 & 0\\
0 & 1
\end{pmatrix}  
\right)
=-65535  \equiv  22\cdot (-20)=
22\cdot
a_{F_{10}}\!\!\left(\begin{pmatrix}
1 & 0\\
0 & 1
\end{pmatrix}   
\right)
\bmod 277,\\
& a_{G_{10}}\!\!\left(\begin{pmatrix}
1 & \tfrac{1+i}{2}\\
\tfrac{1-i}{2} & 2
\end{pmatrix}  
\right)
=-1685920  \equiv  22\cdot (-80)=
22\cdot
a_{F_{10}}\!\!\left(\begin{pmatrix}
1 & \tfrac{1+i}{2}\\
\tfrac{1-i}{2} & 2
\end{pmatrix}  
\right)
\bmod 277.
\end{align*}
\section{Values of the generalized Bernoulli numbers}
\subsection*{Values of $B_{k-1,\chi _{\boldsymbol K}}=\frac{N}{D}$ and $p$ satisfying 
(B) in Theorem \ref{Thm2}}
\label{Table}

\begin{minipage}{0.5\hsize}
\begin{center}
$d_{\boldsymbol K}=-3$ \\
\begin{tabular}{c|ccc}
$k-1$ & $N$  & $D$ & $p$ \\ 
\hline
$1$ & $-1$ & $3$ & -\\ 
$3$ & $2$ & $3$ & -\\
$5$ & $-2\cdot 5$ & $3$ & -\\
$7$ & $2\cdot 7^2$ & $3$ & -\\
$9$ & $-2\cdot 809$ & $3$ & $809$ \\
$11$ & $2\cdot 11\cdot 1847$ & $3$ & $1847$ \\
$13$ & $-2\cdot 7\cdot 13^3 \cdot 47$ & $3$ & $47$\\
$15$ & $2\cdot 5\cdot 419 \cdot 16519$ & $3$ & $419,\ 16519$
\end{tabular}
\end{center}
\end{minipage}
\begin{minipage}{0.5\hsize}
\begin{center}
$d_{\boldsymbol K}=-4$\\
\begin{tabular}{c|ccc}
$k-1$ & $N$ & $D$ & $p$ \\ 
\hline
$1$ & $-1$ & $2$ & -\\ 
$3$ & $3$ & $2$ & -\\
$5$ & $-5^2$ & $2$ & -\\
$7$ & $7\cdot 61$ & $2$ & $61$ \\
$9$ & $-3^2\cdot 5\cdot 277$ & $2$ & $277$ \\
$11$ & $11\cdot 19 \cdot 2659$ & $2$ & $19,\ 2659$ \\
$13$ & $-5\cdot 13^2\cdot 43 \cdot 967$ & $2$ & $43,\ 967$\\
$15$ & $3\cdot 5\cdot 47 \cdot 4241723$ & $2$ & $47,\ 4241723$
\end{tabular}
\end{center}
\end{minipage}

\newpage

\begin{center}
$d_{\boldsymbol K}=-7$\\
\begin{tabular}{c|ccc}
$k-1$ & $N$ & $D$ & $p$ \\ 
\hline
$1$ & $-1$ & $1$ & -\\ 
$3$ & $2^4\cdot 3$ & $7$ & -\\
$5$ & $-2^5 \cdot 5$ & $1$ & -\\
$7$ & $2^4 \cdot 7 \cdot 73$ & $1$ & $73$\\
$9$ & $-2^6\cdot 3^2\cdot 8831$ & $7$ & $8831$ \\
$11$ & $2^4\cdot 11^2 \cdot 73\cdot 701$ & $1$ & $73,\ 701$ \\
$13$ & $-2^5\cdot 13\cdot 173 \cdot 266447$ & $1$ & $173,\ 266447$\\
$15$ & $2^4\cdot 3\cdot 5 \cdot 145764975331$ & $7$ & $145764975331$
\end{tabular}
\end{center}

\begin{center}
$d_{\boldsymbol K}=-8$\\
\begin{tabular}{c|ccc}
$k-1$ & $N$ & $D$ & $p$ \\ 
\hline
$1$ & $-1$ & $1$ & -\\ 
$3$ & $3^2$ & $1$ & -\\
$5$ & $-3 \cdot 5 \cdot 19$ & $1$ & $19$ \\
$7$ & $3^2 \cdot 7 \cdot 307$ & $1$ & $307$\\
$9$ & $-3^3\cdot 83579$ & $1$ & $83579$ \\
$11$ & $3\cdot 11^2\cdot 23 \cdot 48197$ & $1$ & $23,\ 48197$ \\
$13$ & $-3^2\cdot 13\cdot 113 \cdot 811\cdot 9491$ & $1$ & $113,\ 811,\ 9491$\\
$15$ & $3^2\cdot 5\cdot 83\cdot 9275681267$ & $1$ & $83,\ 9275681267$
\end{tabular}
\end{center}

\begin{center}
$d_{\boldsymbol K}=-11$\\
\begin{tabular}{c|ccc}
$k-1$ & $N$ & $D$ & $p$ \\ 
\hline
$1$ & $-1$ & $1$ & -\\ 
$3$ & $2\cdot 3^2$ & $1$ & -\\
$5$ & $- 2\cdot 3 \cdot 5^3 \cdot 17$ & $11$ & $17$ \\
$7$ & $2 \cdot 3^2 \cdot 7 \cdot 17 \cdot 71$ & $1$ & $17,\ 71$\\
$9$ & $-2\cdot 3^3 \cdot 5^3 \cdot 4999$ & $1$ & $4999$ \\
$11$ & $2\cdot 3\cdot 11 \cdot 43 \cdot 269\cdot 14923$ & $1$ & $43,\ 269,\ 14923$ \\
$13$ & $-2\cdot 3^2 \cdot 5^2 \cdot 13 \cdot 787 \cdot 1183579$ & $1$ & $787, \ 1183579$\\
$15$ & $2\cdot 3^2 \cdot 5\cdot 428708869630871$ & $11$ & $428708869630871$
\end{tabular}
\end{center}

\begin{center}
$d_{\boldsymbol K}=-19$\\
\begin{tabular}{c|ccc}
$k-1$ & $N$ & $D$ & $p$ \\ 
\hline
$1$ & $-1$ & $1$ & -\\ 
$3$ & $2\cdot 3 \cdot 11$ & $1$ & $11$\\
$5$ & $- 2\cdot 5^2 \cdot 269$ & $1$ & $269$ \\
$7$ & $2 \cdot 7^2 \cdot 53 \cdot 1021$ & $1$ & $53,\ 1021$\\
$9$ & $-2\cdot 3^2 \cdot 5 \cdot 13 \cdot 67\cdot 851537$ & $19$ & $13,\ 67,\ 851537$ \\
$11$ & $2\cdot 11^3 \cdot 41 \cdot 32427511$ & $1$ & $41,\ 32427511$ \\
$13$ & $-2\cdot 5 \cdot 7 \cdot 11 \cdot 13 \cdot 149 \cdot 3386245229$ & $1$ & $149,\ 3386245229$\\
$15$ & $2\cdot 3 \cdot 5 \cdot 829 \cdot 1249187 \cdot 312206737$ & $1$ & $829,\ 1249187,\ 312206737$
\end{tabular}
\end{center}

\begin{center}
$d_{\boldsymbol K}=-43$\\
\begin{tabular}{c|ccc}
$k-1$ & $N$ & $D$ & $p$ \\ 
\hline
$1$ & $-1$ & $1$ & -\\ 
$3$ & $2\cdot 3 \cdot 83$ & $1$ & $83$\\
$5$ & $- 2\cdot 5 \cdot 29\cdot 31 \cdot 59$ & $1$ & $29,\ 31,\ 59$ \\
$7$ & $2 \cdot 7 \cdot 76565663$ & $1$ & $76565663$\\
$9$ & $-2\cdot 3^2 \cdot 202075601281$ & $1$ & $202075601281$ \\
$11$ & $2\cdot 11^2 \cdot 13^2 \cdot 509\cdot 901553753$ & $1$ & $509,\ 901553753$ \\
$13$ & $-2\cdot 13^2 \cdot 405842695582800517$ & $1$ & $405842695582800517$\\
$15$ & $2\cdot 3 \cdot 5 \cdot 223 \cdot 2791 \cdot 25889\cdot 113167\cdot 24665497$ & $1$ & $223,\ 2791,\ 25889\, 113167\, 24665497$
\end{tabular}
\end{center}

\begin{center}
$d_{\boldsymbol K}=-67$\\
\begin{tabular}{c|ccc}
$k-1$ & $N$ & $D$ & $p$ \\ 
\hline
$1$ & $-1$ & $1$ & -\\ 
$3$ & $2\cdot 3 \cdot 251$ & $1$ & $251$\\
$5$ & $- 2\cdot 5 \cdot 19^2 \cdot 23 \cdot 47$ & $1$ & $19,\ 23,\ 47$ \\
$7$ & $2 \cdot 7 \cdot 1367650871$ & $1$ & $1367650871$\\
$9$ & $-2\cdot 3^2 \cdot 151 \cdot 58035119431$ & $1$ & $151,\ 58035119431$ \\
$11$ & $2\cdot 11 \cdot 3272681\cdot 27444275311$ & $1$ & $3272681,\ 27444275311$ \\
$13$ & $-2\cdot 13 \cdot 73\cdot 1439\cdot 56783\cdot 226088481721$ & $1$ & $73,\ 1439,\ 56783,\ 226088481721$\\
$15$ & $2\cdot 3 \cdot 5 \cdot 541355166251\cdot 51558395838661$ & $1$ & $541355166251,\ 51558395838661$
\end{tabular}
\end{center}

\begin{center}
$d_{\boldsymbol K}=-163$\\
\begin{tabular}{c|ccc}
$k-1$ & $N$ & $D$ & $p$ \\ 
\hline
$1$ & $-1$ & $1$ & -\\ 
$3$ & $2\cdot 3 \cdot 5\cdot 463$ & $1$ & $463$\\
$5$ & $- 2\cdot 5 \cdot 13^2 \cdot 281 \cdot 449$ & $1$ & $13,\ 281,\ 449$ \\
$7$ & $2 \cdot 5^3 \cdot 7 \cdot 3538330867$ & $1$ & $3538330867$\\
$9$ & $-2\cdot 3^2 \cdot 47 \cdot 1213\cdot 294217150811$ & $1$ & $47,\ 1213,\ 294217150811$ \\
$11$ & $2\cdot 5 \cdot 11 \cdot 29^2 \cdot 179\cdot 379\cdot 3566823499667$ & $1$ & $29,\ 179,\ 379,\ 3566823499667$ \\
$13$ & $-2\cdot 13 \cdot 103\cdot 172357\cdot 1097359\cdot 1883639\cdot 2464211$ & $1$ & $103,\ 172357,\ 1097359,\ 1883639,\ 2464211$\\
$15$ & $2\cdot 3 \cdot 5^2 \cdot 358181\cdot 6185071975972339006627199$ & $1$ & $ 358181,\ 6185071975972339006627199$
\end{tabular}
\end{center}



\begin{thebibliography}{}

\bibitem {Bo} S.~B\"ocherer, \"Uber die Fourierkoeffizienten der Siegelschen Eisensteinreihen, manuscripta math. {\bf 45} (1985), 273-288.

\bibitem{Br} J.~Brown, On the congruences primes of Saito-Kurokawa lifts, Math. Res. Lett. 
\textbf{17}(05) (2010), 977-991.

\bibitem{Bri-Hei} K.~Bringmann, B.~Heim, Ramanujan type congruences and Jacobi forms, International Mathematics Research Notices, 2007. 

\bibitem{Bru} J.~H.~Bruinier, Non-vanishing modulo $l$ of Fourier coefficients of half-integral weight modular forms, Duke Math. J. \textbf{98}(3) (1999), 595-611. 

\bibitem{Der} T.~Dern, Hermitesche Modulformen zweiten Grades, Verlag Mainz, Wissenschaftsverlag, Aachen, 2001.

\bibitem{Der2} T.~Dern, Der Hermitesche Maa\ss -Raum zum Zahlk\"{o}rper $\mathbb{Q}(i\sqrt{3})$,
Diplomarbeit, Aachen, 1996.

\bibitem{D-K} T.~Dern, A.~Krieg, Graded rings of Hermitian modular forms of degree 2, manuscripta math. \textbf{110} (2003), 251-272. 

\bibitem{Ei-Za} M.~Eichler, D.~Zagier, The theory of Jacobi forms. Progress in Mathematics, 
\textbf{55}. Birkh\"{a}user, Boston (1985), v+148 pp.

\bibitem {Ha-Pa} F.~Hao, C.~Parry, The Fermat equation over quadratic fields, J. Number Theory 
\textbf{19}(1) (1984), 115-130. 

\bibitem {Ha-Pa2} F.~Hao, C.~Parry, Generalized Bernoulli numbers and $w$-regular primes, Math.
of Computation {\bf 43}(167) (1984), 273-288.

\bibitem{Kat1} H.~Katsurada, Congruence between Duke-Imamoglu-Ikeda lifts and non-Duke-Imamogle-Ikeda lifts. preprint 2011. arXiv:1101.3377v1 [math.NT]

\bibitem{Kat2} H.~Katsurada, Congruence of Siegel modular forms and special values of their standard zeta functions. Math. Z. \textbf{259} (2008), 97-111.

\bibitem{Kat-Miz} H.~Katsurada, S.~Mizumoto, Congruences for Hecke eigenvalues of Siegel modular Forms. preprint 2012.

\bibitem{Kiku-Nag} T.~Kikuta, S.~Nagaoka,
On Hermitian modular forms mod $p$.
J. Math. Soc. Japan \textbf{63} (2011), 211-238.

\bibitem{Kri} A.~Krieg, The Maass spaces on the Hermitian half-space of degree 2, Math. Ann. 
\textbf{289} (1991), 663-681.

\bibitem{Kuro1} N.~Kurokawa, Congruences between Siegel modular forms of degree two. 
Proc. Japan Acad., Ser. A, \textbf{55} (1979), 417-422.

\bibitem{Kuro2}N.~Kurokawa, Congruences between Siegel modular forms of degree two, II. 
Proc. Japan Acad., Ser. A, \textbf{57} (1981), 140-145.

\bibitem{Kuro3} N.~Kurokawa, On Siegel eigenforms, Proc. Japan Acad. Ser. A, 
\textbf{57} (1981), 47-50. 


\bibitem {Ma} H.~Maass, Die Fourierkoeffizienten der Eisensteinreihen zweiten grades, Mat. Fys. Medd. Dan. Vid. Selsk. {\bf 34}(7) (1964), 1-25.

\bibitem{Miz} S.~Mizumoto, Congruences for eigenvalues of Hecke operators on Siegel modular forms of degree two.
Math. Ann. \textbf{275} (1986), 149-161.

\bibitem {Swi} H.P.F.~Swinnerton-Dyer, On $l$-adic representations and congruences for coefficients of modular forms. In Modular functions of one variable, III (Proc. Internat. Summer School, Univ. Antwerp, 1972), 1-55. Lecture Notes in Math. 350. Springer Verlag, Berlin, 1973.

\end{thebibliography}
\end{document}